\documentclass[11pt]{article}

\usepackage[margin=1.15in]{geometry}
\usepackage{amsmath,amssymb,amsthm,mathtools}
\usepackage[hidelinks]{hyperref}

\newtheorem{theorem}{Theorem}[section]
\newtheorem{proposition}[theorem]{Proposition}
\newtheorem{lemma}[theorem]{Lemma}
\newtheorem{corollary}[theorem]{Corollary}
\newtheorem{question}[theorem]{Question}
\newtheorem{conjecture}[theorem]{Conjecture}
\newtheorem{remark}[theorem]{Remark}
\newtheorem{example}[theorem]{Example}

\newcommand{\PP}{\mathbb P}
\newcommand{\NN}{\mathbb N}

\newcommand{\p}{\mathfrak p}
\newcommand{\HS}{\operatorname{HS}}

\newcommand{\Res}{\rho}
\newcommand{\aRes}{\widehat{\rho}}
\newcommand{\walpha}{\widehat{\alpha}}
\newcommand{\Rss}{\mathcal R_s}

\title{\bf Symbolic powers of the ideal of \(n\) general points in \(\PP^{n-1}\)}
\author{Ralf Fr\"oberg \and Boris Shapiro}
\date{September 9, 2026}

\begin{document}
\maketitle

\begin{abstract}
Problem L of Fr\"oberg--Lundqvist--Oneto--Shapiro asks for the
difference between the Hilbert series of ordinary and symbolic powers
of the ideal of general points in projective space.  We solve this
completely for \(n\) general points of \(\PP^{n-1}\).  Besides a closed
formula for
\[
  \HS(S/I^m)-\HS(S/I^{(m)}),
\]
we determine all minimal monomial generators of \(I^{(m)}\),  and describe the symbolic Rees algebra.
We also show that containment \(I^{(m)}\subseteq I^r\) is detected
solely by initial degrees.  This gives the exact containment threshold,
the Waldschmidt constant \(\walpha\), the resurgence \(\Res\), and the asymptotic
resurgence \(\aRes\):
\[
 \walpha(I)=\frac{n}{n-1},
 \qquad
 \Res(I)=\aRes(I)=\frac{2(n-1)}{n}.
\]
We also take the first step beyond \(n\) points: for \(n+1\) general
points of \(\PP^{n-1}\) --- again a rigid, non-monomial configuration
--- we identify the defining quadrics, resolve the case \(n=3\)
completely (a complete intersection, with \(J^{(m)}=J^m\) for all
\(m\) and resurgence \(1\)), and propose an exact Waldschmidt-constant
formula \(\walpha=\frac{n+1}{n-1}\) for all \(n\), verified
computationally in every case we could check.
\end{abstract}

\section{Introduction}

Problem L in \cite{FLOS18} asks the following.

\medskip
\noindent
\textbf{Problem L.}
For the ideal \(I\) of \(s\) general points of \(\PP^{n-1}\), what is
the difference between the Hilbert series of the \(m\)-th symbolic
power and the \(m\)-th ordinary power?

\medskip
We answer this completely for \(s=n\).  This is the first case in
which the points span the whole ambient space and the configuration is
projectively rigid: every ordered \(n\)-tuple in linear general
position is projectively equivalent to the coordinate points.  Thus
the symbolic-versus-ordinary problem becomes monomial while still
exhibiting a genuinely nontrivial difference.  (For \(s<n\), the
points lie in a proper linear subspace, so the ambient-space problem
has an additional linear part.)

Throughout, \(Z\subset \PP^{n-1}\) consists of \(n\) points in general
position, \(n\ge 2\).  Since \(\operatorname{PGL}_n\) acts transitively
on ordered \(n\)-tuples in general position, we may take \(Z\) to be
the coordinate points.  With
\[
  S=\Bbbk[x_1,\ldots,x_n],
  \qquad
  \p_i=(x_1,\ldots,\widehat{x_i},\ldots,x_n),
\]
we have
\[
 I=I_Z=\bigcap_{i=1}^n \p_i
   =(x_ix_j:1\le i<j\le n).
\]
Thus both \(I^m\) and
\[
 I^{(m)}=\bigcap_{i=1}^n \p_i^m
\]
are monomial ideals, reducing the problem to a lattice-point count.

For \(\alpha=(\alpha_1,\ldots,\alpha_n)\in\NN^n\), write
\[
 |\alpha|=\sum_i\alpha_i,
 \qquad
 x^\alpha=x_1^{\alpha_1}\cdots x_n^{\alpha_n},
 \qquad
 M(\alpha)=\max_i\alpha_i.
\]

\section{Two monomial criteria}

The following elementary criterion is the basic input.

\begin{lemma}\label{lem:criteria}
For every \(m\ge1\) and \(\alpha\in\NN^n\):
\begin{enumerate}
\item
\[
 x^\alpha\in I^{(m)}
 \quad\Longleftrightarrow\quad
 |\alpha|-M(\alpha)\ge m;
\]
\item
\[
 x^\alpha\in I^m
 \quad\Longleftrightarrow\quad
 \sum_{i=1}^n\min(\alpha_i,m)\ge 2m.
\]
\end{enumerate}
\end{lemma}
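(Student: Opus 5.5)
For part (1), I will use that each \(\p_i^m\) is a monomial ideal generated by the degree-\(m\) monomials in the variables other than \(x_i\). Hence \(x^\alpha\in\p_i^m\) if and only if \(|\alpha|-\alpha_i\ge m\). Since \(I^{(m)}=\bigcap_i\p_i^m\) and an intersection of monomial ideals contains a monomial exactly when each ideal does, we get \(x^\alpha\in I^{(m)}\) if and only if \(|\alpha|-\alpha_i\ge m\) for all \(i\). The binding case is the maximal \(\alpha_i\), which gives \(|\alpha|-M(\alpha)\ge m\). This part is immediate.

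For part (2), \(I^m\) is generated by products of \(m\) edges \(x_ix_j\) (\(i<j\)) of the complete graph \(K_n\). So \(x^\alpha\in I^m\) exactly when there is \(\beta\le\alpha\) componentwise that is the degree vector of a multigraph on \([n]\) with \(m\) edges and no loops. The key combinatorial fact I will prove is:
\begin{quote}
\(\beta\in\NN^n\) is the degree sequence of a loopless multigraph if and only if \(|\beta|\) is even and \(M(\beta)\le|\beta|/2\).
\end{quote}
Necessity is clear, since every edge at the maximal vertex uses another vertex. For sufficiency, I will induct on \(|\beta|\): join a vertex of largest degree to a vertex of second largest degree, and check that the condition is preserved. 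The only delicate case is when several entries tie near \(|\beta|/2\), and it goes through by parity.

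With this fact, the proof of (2) reduces to showing that there is \(\beta\le\alpha\) with \(|\beta|=2m\) and \(M(\beta)\le m\) if and only if \(\sum_i\min(\alpha_i,m)\ge2m\). For necessity, \(\beta_i\le\min(\alpha_i,m)\), so summing gives \(2m\le\sum_i\min(\alpha_i,m)\). For sufficiency, start from \(\gamma_i=\min(\alpha_i,m)\). Then \(\gamma\le\alpha\), \(M(\gamma)\le m\) and \(|\gamma|\ge2m\). Decrease entries one unit at a time until the total is exactly \(2m\); the cap \(M\le m\) is never violated. Then \(\beta\) is a loopless multigraph degree sequence with \(m\) edges, so \(x^\beta\in I^m\), and \(x^\beta\) divides \(x^\alpha\).

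The main obstacle is the graph-realization lemma. Everything else is bookkeeping with monomial ideals. If the induction gets awkward, an alternative is an explicit pairing. List the \(2m\) "half-edges" in blocks by vertex, ordered \(1,1,\dots,2,2,\dots\), and pair position \(k\) with position \(k+m\). Because each block has length at most \(m\), no pair lies inside a single block, so no loops arise. This is cleaner and I would use it directly.
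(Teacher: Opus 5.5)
Your proposal is correct and follows the paper's route. Part (1) is proved by testing membership in each $\p_i^m$, and part (2) by reducing to whether some $\beta\le\alpha$ with $|\beta|=2m$ and $M(\beta)\le m$ is the degree sequence of a loopless multigraph.

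The one difference is how that realization step is proved. The paper uses the greedy induction, joining the two largest entries; your "delicate tie case" is exactly $\beta_3\ge m$, which is impossible since $3m>2m$. Your explicit pairing of position $k$ with position $k+m$ is equally valid. Your truncation argument also supplies the final equivalence with $\sum_i\min(\alpha_i,m)\ge 2m$, which the paper only asserts.
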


\begin{proof}
For (1), \(x^\alpha\in\p_i^m\) if and only if
\(\sum_{j\ne i}\alpha_j\ge m\).  Intersecting over \(i\) gives
\[
 |\alpha|-\alpha_i\ge m\quad\text{for every }i,
\]
equivalently \(|\alpha|-M(\alpha)\ge m\).

For (2), the exponent vectors of the degree-\(2m\) generators of
\(I^m\) are sums of \(m\) vectors \(e_i+e_j\), \(i\ne j\).  Equivalently,
they are degree sequences of loopless multigraphs with \(m\) edges.
A vector \(\beta\in\NN^n\) with \(|\beta|=2m\) is such a degree
sequence if and only if \(M(\beta)\le m\).

Necessity is immediate.  For sufficiency, order
\(\beta_1\ge\beta_2\ge\cdots\).  Join vertices \(1\) and \(2\) and
subtract one from \(\beta_1,\beta_2\).  The resulting vector has sum
\(2(m-1)\), and the new maximum is at most \(m-1\): indeed
\(\beta_3\ge m\) would force
\(\beta_1=\beta_2=\beta_3=m\), contradicting
\(|\beta|=2m\).  Induction on \(m\) finishes the argument.

Hence \(x^\alpha\in I^m\) precisely when there exists
\(\beta\le\alpha\) componentwise with \(|\beta|=2m\) and
\(M(\beta)\le m\).  Such a \(\beta\) exists exactly when
\(\sum_i\min(\alpha_i,m)\ge2m\).
\end{proof}

\section{The Hilbert-series difference}

\begin{theorem}\label{thm:hilbert}
For \(n\ge2\), \(m\ge1\), and \(q\ge0\),
\[
 \dim_\Bbbk (I^{(m)})_q-\dim_\Bbbk(I^m)_q
 =
 \begin{cases}
 \#\{\alpha\in\NN^n:
 |\alpha|=q,\ \alpha_i\le q-m\text{ for all }i\},
 &m\le q\le2m-1,\\[2mm]
 0,&\text{otherwise}.
 \end{cases}
\]
Equivalently,
\begin{align*}
 \HS(S/I^m)-\HS(S/I^{(m)})
  =\sum_{q=m}^{2m-1}
   \left(
    \sum_{j=0}^n(-1)^j\binom nj
    \binom{q-j(q-m+1)+n-1}{n-1}
   \right)t^q,
\end{align*}
where a binomial coefficient with upper entry \(<n-1\) is interpreted
as zero.
In particular \(I^{(m)}\) and \(I^m\) agree in every degree
\(q\ge2m\), and \(I^{(m)}=I^m\) for every \(m\) if and only if \(n=2\).
\end{theorem}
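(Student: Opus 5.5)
Since both ideals are monomial, $I^m\subseteq I^{(m)}$, and the difference in degree $q$ is the number of $\alpha\in\NN^n$ with $|\alpha|=q$ such that $x^\alpha\in I^{(m)}$ but $x^\alpha\notin I^m$. By Lemma~\ref{lem:criteria}, these are the $\alpha$ with $q-M(\alpha)\ge m$ and $\sum_i\min(\alpha_i,m)<2m$. The plan is to show that, for $|\alpha|=q$, this pair of conditions is equivalent to: $m\le q\le 2m-1$ and $M(\alpha)\le q-m$.

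\emph{Step 1 (degrees $q\ge 2m$).} Suppose $M(\alpha)\le q-m$. If moreover $M(\alpha)\le m$, then $\sum\min(\alpha_i,m)=q\ge 2m$. Otherwise some $\alpha_i>m$. That coordinate contributes $m$, and the others contribute at least $\min(q-M(\alpha),m)$. Here I am using that at most one coordinate can exceed $m$ in a way that matters. If two coordinates both exceed $m$, they already contribute $2m$. If only one does, the rest sum to $q-M(\alpha)\ge m$, and each of them is at most $m$. In every case the sum is at least $2m$, so $x^\alpha\in I^m$. Hence there is no difference when $q\ge 2m$. For $q<m$ the condition $q-M(\alpha)\ge m$ fails, so there is no difference there either.

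\emph{Step 2 ($m\le q\le 2m-1$).} Here $M(\alpha)\le q-m\le m-1<m$, so $\sum\min(\alpha_i,m)=q<2m$. Thus every $\alpha$ in $I^{(m)}$ of this degree automatically lies outside $I^m$. The difference is therefore exactly $\#\{|\alpha|=q,\ \alpha_i\le q-m\}$, which is the displayed case formula.

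\emph{Step 3 (closed formula).} I count compositions of $q$ into $n$ parts, each at most $c=q-m$, by inclusion–exclusion over the set of coordinates with $\alpha_i\ge c+1$. This gives $\sum_j(-1)^j\binom nj\binom{q-j(c+1)+n-1}{n-1}$, with binomials vanishing when $q-j(c+1)<0$, i.e.\ when the upper entry is below $n-1$. Multiplying by $t^q$ and summing yields $\HS(S/I^m)-\HS(S/I^{(m)})$. The sign is right because the quotient by the smaller ideal $I^m$ is larger.

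\emph{Step 4 (final claim).} Equality $I^{(m)}=I^m$ holds iff the count vanishes for all $q\in[m,2m-1]$. For $n=2$, the conditions $\alpha_1+\alpha_2=q$ and $\alpha_i\le q-m$ give $q\le 2(q-m)$, i.e.\ $q\ge 2m$, so the count is always zero. For $n\ge3$, take $m=2$, $q=3$, $\alpha=(1,1,1,0,\dots)$: it is counted, since $x_1x_2x_3\in I^{(2)}\setminus I^2$.

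The only delicate point is the case analysis in Step 1, specifically how many coordinates exceed $m$; everything else is routine.
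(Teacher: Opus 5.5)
Your proof is correct and follows essentially the same route as the paper: both reduce via Lemma~\ref{lem:criteria} to counting exponent vectors, analyse how many coordinates reach $m$ (the paper through the set $T=\{i:\alpha_i\ge m\}$, you by splitting into the degree ranges $q\ge 2m$ and $q<2m$), and then count bounded compositions by inclusion--exclusion. The only other difference is cosmetic: for $n\ge3$ you show $I^{(m)}\neq I^m$ using $x_1x_2x_3\in I^{(2)}\setminus I^2$, whereas the paper uses $x_1\cdots x_n\in I^{(n-1)}\setminus I^{n-1}$.
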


\begin{proof}
By Lemma~\ref{lem:criteria}, the difference counts the vectors
\(\alpha\) of degree \(q\) satisfying
\[
 |\alpha|-M(\alpha)\ge m,
 \qquad
 \sum_i\min(\alpha_i,m)<2m.
\]
Let \(T=\{i:\alpha_i\ge m\}\).  If \(|T|\ge2\), the second sum is at
least \(2m\), impossible.  If \(T=\{i_0\}\), then
\[
 m+(q-\alpha_{i_0})<2m
\]
forces \(\alpha_{i_0}>q-m\), contradicting
\(q-M(\alpha)\ge m\).  Thus \(T=\varnothing\).  Then the second
condition is simply \(q<2m\), while the first is
\(M(\alpha)\le q-m\).  This proves the counting formula.

The inclusion--exclusion formula is the standard count of
compositions of \(q\) into \(n\) nonnegative parts bounded above by
\(q-m\).
For \(n=2\), \(I=(x_1x_2)\) is principal.  For \(n\ge3\), take
\(m=n-1\) and \(\alpha=(1,\ldots,1)\), obtaining
\(x_1\cdots x_n\in I^{(n-1)}\setminus I^{n-1}\).
\end{proof}

The support of the difference can be made completely explicit.

\begin{corollary}\label{cor:support}
Assume \(n\ge3\) and \(m\ge2\), and put
\[
 a_{n,m}=m+\left\lceil\frac{m}{n-1}\right\rceil.
\]
Then
\[
 (I^{(m)}/I^m)_q\ne0
 \quad\Longleftrightarrow\quad
 a_{n,m}\le q\le2m-1.
\]

\end{corollary}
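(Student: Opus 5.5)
By Theorem~\ref{thm:hilbert}, \((I^{(m)}/I^m)_q\) has dimension equal to the number of \(\alpha\in\NN^n\) with \(|\alpha|=q\) and \(\alpha_i\le q-m\) for all \(i\), provided \(m\le q\le 2m-1\), and it vanishes otherwise. The plan is therefore to decide, for each \(q\) in the window \([m,2m-1]\), when such a bounded composition exists.

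Put \(b=q-m\). Compositions of \(q\) into \(n\) nonnegative parts, each at most \(b\), exist exactly when \(q\le nb\). Necessity is clear from summing the bounds. For sufficiency, start from \(\alpha=(b,\ldots,b)\), which has total \(nb\ge q\), and lower entries one unit at a time until the total is \(q\). This is possible because \(q\ge0\).

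The condition \(q\le n(q-m)\) rewrites as \((n-1)q\ge nm\). This is the same as \(q-m\ge m/(n-1)\), and since \(q-m\) is an integer, it is equivalent to \(q\ge m+\lceil m/(n-1)\rceil=a_{n,m}\). Note that \(q\ge a_{n,m}\) already gives \(q>m\), so the lower end \(q\ge m\) of the window is automatic. Combining this with the window \(q\le 2m-1\) gives the equivalence as stated, with the interval read as empty when \(a_{n,m}>2m-1\).

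It remains to check that the interval is genuinely nonempty under the hypotheses \(n\ge3\), \(m\ge2\). There \(m/(n-1)\le m/2\), so \(\lceil m/(n-1)\rceil\le\lceil m/2\rceil\le m-1\) for \(m\ge2\). Hence \(a_{n,m}\le 2m-1\). This is the only step that uses the hypotheses, and it is where I expect the (minor) obstacle: for \(m=1\) or \(n=2\) the interval is empty, consistent with \(I^{(1)}=I\) and with the \(n=2\) case of Theorem~\ref{thm:hilbert}.
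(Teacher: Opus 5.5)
Your proposal is correct and follows essentially the paper's argument: it reduces via Theorem~\ref{thm:hilbert} to the existence of a composition of \(q\) into \(n\) parts bounded by \(q-m\), i.e.\ \(q\le n(q-m)\), equivalently \(q-m\ge\lceil m/(n-1)\rceil\), combined with \(q\le 2m-1\). Your extra check that the window is nonempty for \(n\ge3\), \(m\ge2\) is a harmless addition that the paper leaves implicit.
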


\begin{proof}
Write \(q=m+k\).  The count in Theorem~\ref{thm:hilbert} is nonzero
exactly when \(m+k\le nk\), i.e.
\(k\ge\lceil m/(n-1)\rceil\), while \(q\le2m-1\) is \(k\le m-1\).

\end{proof}

\begin{example}
For \(n=3\) and \(m=2\), the difference \(\Delta_q\) is concentrated in degree
\(3\) and equals \(1\), represented by the classical monomial
\[
 xyz\in I^{(2)}\setminus I^2.
\]
For \(n=3\), the next values are
\[
\begin{array}{c|c}
m& (q,\Delta_q)\\ \hline
2&(3,1)\\
3&(5,3)\\
4&(6,1),(7,6)\\
5&(8,3),(9,10),
\end{array}
\]
while for \(n=4\) one gets
\[
\begin{array}{c|c}
m& (q,\Delta_q)\\ \hline
2&(3,4)\\
3&(4,1),(5,16)\\
4&(6,10),(7,40).
\end{array}
\]
\end{example}

\section{All minimal generators and all symbolic defects}

We now determine the minimal generators of every symbolic power.

\begin{theorem}\label{thm:minimal-generators}
A monomial \(x^\alpha\) is a minimal monomial generator of
\(I^{(m)}\) if and only if
\begin{equation}\label{eq:min-gen-characterization}
 |\alpha|-M(\alpha)=m
 \qquad\text{and}\qquad
 M(\alpha)\text{ is attained at least twice}.
\end{equation}
Hence, if \(k=M(\alpha)\), the possible degrees of minimal generators
are precisely
\[
 m+k,
 \qquad
 \left\lceil\frac{m}{n-1}\right\rceil\le k\le m.
\]
The generators with \(k=m\) are exactly
\[
 x_i^m x_j^m,\qquad 1\le i<j\le n.
\]
All other minimal generators have degree \(<2m\) and survive minimally
in \(I^{(m)}/I^m\).
\end{theorem}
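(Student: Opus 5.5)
The plan is to work entirely with the monomial criterion of Lemma~\ref{lem:criteria}(1), namely \(x^\alpha\in I^{(m)}\) if and only if \(f(\alpha):=|\alpha|-M(\alpha)\ge m\). Since \(I^{(m)}\) is a monomial ideal, \(x^\alpha\) is a minimal generator exactly when \(f(\alpha)\ge m\) and \(f(\alpha-e_i)<m\) for every \(i\) with \(\alpha_i>0\). So the whole statement reduces to understanding how \(f\) changes when a single coordinate is lowered by one.

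First step: compute that change. If \(\alpha_i<M(\alpha)\), or if \(\alpha_i=M(\alpha)\) and the maximum is attained at another index, then \(M(\alpha-e_i)=M(\alpha)\) and \(f(\alpha-e_i)=f(\alpha)-1\). If \(i\) is the unique index with \(\alpha_i=M(\alpha)\), then \(M\) drops by one and \(f(\alpha-e_i)=f(\alpha)\). Minimality therefore forces \(f(\alpha)=m\); otherwise every decrement keeps \(f\ge m\). It also forces the maximum to be attained at least twice; otherwise lowering the unique maximal coordinate keeps \(f=m\). (The case \(\alpha=0\) is excluded since \(m\ge1\).) Conversely, if \(f(\alpha)=m\) and \(M\) is attained twice, then every decrement lowers \(f\) to \(m-1\), so \(x^\alpha\) is minimal. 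This proves \eqref{eq:min-gen-characterization}.

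Second step: the degree list. Write \(k=M(\alpha)\), so that \(|\alpha|=m+k\). The two largest entries both equal \(k\), so \(m=|\alpha|-k\ge k\), i.e. \(k\le m\). The remaining \(n-1\) entries sum to \(m\) and are each at most \(k\), so \(m\le (n-1)k\), i.e. \(k\ge\lceil m/(n-1)\rceil\). For realizability, fix any \(k\) in that range and put \(\alpha_1=\alpha_2=k\). Distribute the remaining \(m-k\) among \(x_3,\dots,x_n\) with entries at most \(k\); this is possible because \(m-k\le (n-2)k\). For \(k=m\) the other entries are forced to vanish, which gives exactly \(x_i^mx_j^m\).

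Third step: survival in \(I^{(m)}/I^m\). For \(k<m\) the degree is \(m+k<2m\), while \(I^m\) is generated in degree \(2m\), so \(x^\alpha\notin I^m\). Moreover, \(x^\alpha\) is a minimal generator of \(I^{(m)}\), so it is not in \(\mathfrak m I^{(m)}\supseteq \mathfrak m I^{(m)}+I^m\) restricted to its degree. Hence its image is a minimal generator of the quotient module.

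The only delicate point is the bookkeeping in the first step, namely the unique-maximum case. The rest is routine.
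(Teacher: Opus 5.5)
Your proof is correct and follows the paper's argument essentially step by step. You use the same bookkeeping for how \(|\alpha|-M(\alpha)\) changes when one exponent is lowered, which gives equality, the doubled maximum, and the converse. You also use the same bounds \(\lceil m/(n-1)\rceil\le k\le m\) and the same construction with two entries equal to \(k\). Your third step, which uses \((I^m)_q=0\) for \(q<2m\) so that \((\mathfrak m I^{(m)}+I^m)_q=(\mathfrak m I^{(m)})_q\), supplies the justification for the survival-in-\(I^{(m)}/I^m\) claim that the paper leaves implicit.
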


\begin{proof}
Suppose first that \(x^\alpha\) is minimal in \(I^{(m)}\).  If
\(|\alpha|-M(\alpha)\ge m+1\), decreasing a suitable positive exponent
by one leaves the monomial in \(I^{(m)}\), contradicting minimality.
Thus equality holds in the first condition of
\eqref{eq:min-gen-characterization}.

If the maximum \(M=M(\alpha)\) were attained uniquely, say at
\(\alpha_1=M\), then after replacing \(\alpha_1\) by \(M-1\), the new
maximum is at most \(M-1\), and therefore
\[
 (|\alpha|-1)-M(\alpha-e_1)\ge |\alpha|-M=m.
\]
Again the resulting proper divisor lies in \(I^{(m)}\), a
contradiction.  Thus the maximum is attained at least twice.

Conversely, assume \eqref{eq:min-gen-characterization}.  Decreasing any
positive coordinate by one leaves some coordinate equal to
\(M(\alpha)\); hence the new vector \(\beta\) satisfies
\[
 |\beta|-M(\beta)=m-1.
\]
Thus no proper monomial divisor lies in \(I^{(m)}\), proving
minimality.

Now write \(k=M(\alpha)\).  The equality
\(|\alpha|-k=m\) shows that the remaining \(n-1\) coordinates,
after choosing one maximal coordinate, sum to \(m\) and are bounded
by \(k\).  Hence
\[
 k\ge\left\lceil\frac{m}{n-1}\right\rceil.
\]
Also the maximum is attained at least twice, so \(k\le m\).
Conversely every integer in this range occurs: take two entries equal
to \(k\) and distribute \(m-k\) among the remaining \(n-2\) entries,
all at most \(k\).  Finally, \(k=m\) forces precisely two entries to
equal \(m\) and all others to vanish.
\end{proof}

\section{The symbolic Rees algebra}

The preceding description has a particularly simple algebraic
interpretation.  For \(F\subseteq[n]\), write
\[
 x_F=\prod_{i\in F}x_i.
\]

\begin{theorem}\label{thm:symbolic-rees}
The symbolic Rees algebra
\[
 \Rss(I)=\bigoplus_{m\ge0} I^{(m)}t^m
\]
is generated as an \(S\)-algebra by
\[
 x_Ft^{|F|-1},
 \qquad
 F\subseteq[n],\quad |F|\ge2.
\]
Moreover this is a minimal \(S\)-algebra generating set.  In
particular the largest symbolic degree of a minimal algebra generator
is \(n-1\), and the number of positive-symbolic-degree algebra
generators is
\[
 \sum_{j=2}^n\binom nj=2^n-n-1.
\]
\end{theorem}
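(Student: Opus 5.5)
Everything reduces to exponent vectors via Lemma~\ref{lem:criteria}(1). We have \(x^\alpha t^m\in\Rss(I)\) exactly when \(|\alpha|-M(\alpha)\ge m\). Note that each proposed generator lies in \(\Rss(I)\): for \(\alpha=\mathbf 1_F\) with \(|F|\ge2\) we get \(|\alpha|-M(\alpha)=|F|-1\).

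For generation, it suffices to write each minimal generator \(x^\alpha t^m\) of \(I^{(m)}t^m\) as a product of such elements, times \(S\). By Theorem~\ref{thm:minimal-generators}, these generators are characterized by \(|\alpha|-M(\alpha)=m\) with the maximum \(k=M(\alpha)\) attained at least twice. We then decompose \(\alpha\) into layers: \(\alpha=\sum_{\ell=1}^{k}\mathbf 1_{F_\ell}\), where \(F_\ell=\{i:\alpha_i\ge\ell\}\). Since the maximum \(k\) is attained at least twice, every \(F_\ell\) (\(1\le\ell\le k\)) has \(|F_\ell|\ge2\). The symbolic degrees add up to \(\sum_\ell(|F_\ell|-1)=|\alpha|-k=m\). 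Hence \(x^\alpha t^m=\prod_\ell x_{F_\ell}t^{|F_\ell|-1}\), which proves generation. Elements \(x^\beta t^{m'}\) that are not minimal generators are \(S\)-multiples of minimal ones, so they are covered as well.

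For minimality, we suppose \(x_Ft^{|F|-1}\) (with \(|F|\ge2\)) is a product of an element of \(S\) and at least two positive-degree elements of \(\Rss(I)\), or an \(S\)-multiple of another generator of the same symbolic degree. Any factorization gives \(\mathbf 1_F=\gamma+\sum_j\beta^{(j)}\) with \(x^{\beta^{(j)}}t^{m_j}\in\Rss(I)\), \(m_j\ge1\), and \(\sum m_j=|F|-1\). The vectors \(\beta^{(j)}\) are 0/1 vectors with disjoint supports \(G_j\subseteq F\). For each factor, \(m_j\le|G_j|-M(\beta^{(j)})=|G_j|-1\) (with \(|G_j|\ge2\), since \(m_j\ge1\)). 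Summing, \(|F|-1\le\sum_j(|G_j|-1)\le|F|-(\text{number of factors})\). This forces a single factor with \(G_1=F\) and \(\gamma=0\), i.e.\ the generator itself. Degree-zero generators (elements of \(S\)) are irrelevant since we work as an \(S\)-algebra.

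The count is immediate. The maximal symbolic degree \(n-1\) occurs for \(F=[n]\), and the number of generators is \(\#\{F:|F|\ge2\}=2^n-n-1\). The main point requiring care is the minimality argument, specifically the inequality chain showing that a nontrivial factorization loses at least one unit of symbolic degree per extra factor. The layer decomposition for generation is straightforward once the "maximum attained twice" condition from Theorem~\ref{thm:minimal-generators} is used.
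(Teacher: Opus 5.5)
Your proposal is correct and follows the paper's proof essentially step for step. Generation uses the same layer decomposition $\alpha=\sum_{\ell}\mathbf 1_{F_\ell}$, justified by the ``maximum attained twice'' characterization of Theorem~\ref{thm:minimal-generators}, and minimality uses the same disjoint-supports count $\sum_j(|G_j|-1)\le |F|-p$. Your minimality step is even slightly more careful than the paper's: it allows arbitrary positive-degree monomial factors from $\Rss(I)$, and it explicitly rules out the one-factor case $x_Ft^{|F|-1}=x^\gamma\cdot x_Gt^{|G|-1}$, which the paper leaves implicit.
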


\begin{proof}
For \(|F|\ge2\), the monomial \(x_F\) belongs to
\(I^{(|F|-1)}\): after omitting any one variable, at least
\(|F|-1\) variables of \(x_F\) remain.

Conversely, it suffices to factor a minimal generator \(x^\alpha\) of
\(I^{(m)}\).  Put \(k=M(\alpha)\), and for \(1\le \ell\le k\) define
\[
 F_\ell=\{i:\alpha_i\ge\ell\}.
\]
By Theorem~\ref{thm:minimal-generators}, the maximum is attained at
least twice, hence \(|F_\ell|\ge2\) for all \(\ell\).  The layer
decomposition gives
\[
 \alpha=\mathbf1_{F_1}+\cdots+\mathbf1_{F_k}
\]
and
\[
 m=|\alpha|-k
   =\sum_{\ell=1}^k(|F_\ell|-1).
\]
Therefore
\[
 x^\alpha t^m
 =
 \prod_{\ell=1}^k
 \left(x_{F_\ell}t^{|F_\ell|-1}\right).
\]
Any nonminimal monomial in \(I^{(m)}\) is an \(S\)-multiple of a
minimal one, proving generation.

For minimality of the displayed algebra generators, suppose
\(x_Ft^{|F|-1}\) were a product of at least two positive-\(t\)-degree
generators and an \(S\)-monomial.  Since \(x_F\) is squarefree, the
positive-\(t\)-degree factors must have pairwise disjoint supports.
If there are \(p\ge2\) such factors with supports \(F_1,\ldots,F_p\),
their total \(t\)-degree is
\[
 \sum_{\nu=1}^p(|F_\nu|-1)
 =
 \left|\bigcup_\nu F_\nu\right|-p
 \le |F|-p
 <|F|-1,
\]
a contradiction.
\end{proof}

\begin{remark}
For \(n=3\), Theorem~\ref{thm:symbolic-rees} says that the symbolic
Rees algebra is generated over \(S\) by
\[
 x_1x_2t,\quad x_1x_3t,\quad x_2x_3t,\quad x_1x_2x_3t^2.
\]
For general \(n\), the squarefree monomials of degree \(j\) appear in
symbolic degree \(j-1\).
\end{remark}

\section{Initial degrees and exact containments}

Write \(\alpha(J)\) for the least degree of a nonzero homogeneous
element of a homogeneous ideal \(J\).

\begin{proposition}\label{prop:initial}
For every \(m\ge1\),
\begin{equation}\label{eq:initial}
 \alpha(I^{(m)})
 =
 m+\left\lceil\frac{m}{n-1}\right\rceil.
\end{equation}
Consequently
\[
 \walpha(I)
 :=\lim_{m\to\infty}\frac{\alpha(I^{(m)})}{m}
 =
 \frac{n}{n-1}.
\]
\end{proposition}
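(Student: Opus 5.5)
Since \(I^{(m)}\) is a monomial ideal, its initial degree is the least degree \(|\alpha|\) of a monomial \(x^\alpha\in I^{(m)}\). By Lemma~\ref{lem:criteria}(1) this is the minimum of \(|\alpha|\) over \(\alpha\in\NN^n\) with \(|\alpha|-M(\alpha)\ge m\). The plan is to prove the lower bound with a short averaging argument, and to get the matching upper bound from the explicit generators in Theorem~\ref{thm:minimal-generators}.

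\emph{Lower bound.} Let \(x^\alpha\in I^{(m)}\) and put \(k=M(\alpha)\). Choose an index \(i_0\) with \(\alpha_{i_0}=k\). The other \(n-1\) coordinates sum to \(|\alpha|-k\ge m\), and each is at most \(k\). Hence \((n-1)k\ge m\), so \(k\ge\lceil m/(n-1)\rceil\). It follows that
\[
 |\alpha|\ge m+k\ge m+\left\lceil\frac{m}{n-1}\right\rceil .
\]

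\emph{Upper bound.} Put \(k=\lceil m/(n-1)\rceil\). Since \(m\ge1\) and \(n\ge 2\), we have \(1\le k\le m\). Theorem~\ref{thm:minimal-generators} shows that a minimal generator of degree \(m+k\) exists for every \(k\) in this range, and this gives \eqref{eq:initial}. To be self-contained one can also write the witness down directly. Take \(\alpha_1=k\), and distribute \(m\) among \(\alpha_2,\dots,\alpha_n\) with each entry at most \(k\); this is possible because \((n-1)k\ge m\). Then \(M(\alpha)=k\) and \(|\alpha|-M(\alpha)=m\), so \(x^\alpha\in I^{(m)}\) has degree \(m+k\).

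\emph{Waldschmidt constant.} From \eqref{eq:initial},
\[
 \frac{\alpha(I^{(m)})}{m}=1+\frac{1}{m}\left\lceil\frac{m}{n-1}\right\rceil .
\]
The ceiling lies between \(m/(n-1)\) and \(m/(n-1)+1\), so the right-hand side tends to \(1+\frac{1}{n-1}=\frac{n}{n-1}\) as \(m\to\infty\). No step here is a real obstacle. The only points needing care are the pigeonhole bound \((n-1)k\ge m\) and checking \(k\le m\) in the construction, which holds for \(n\ge2\).
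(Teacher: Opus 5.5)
Your proposal is correct and follows the paper's proof: the same pigeonhole lower bound \((n-1)M(\alpha)\ge m\), and the same explicit witness with one coordinate equal to \(\lceil m/(n-1)\rceil\) and weight \(m\) spread over the remaining coordinates. The limit, which the paper calls immediate, you spell out. Citing Theorem~\ref{thm:minimal-generators} (and checking \(k\le m\)) is harmless but unnecessary once you have written down the direct witness.
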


\begin{proof}
If \(x^\alpha\in I^{(m)}\) and \(M=M(\alpha)\), then
\[
 |\alpha|\ge m+M.
\]
Since the other \(n-1\) coordinates sum to at least \(m\) and are all
at most \(M\), necessarily
\[
 M\ge\left\lceil\frac{m}{n-1}\right\rceil.
\]
This proves the lower bound in \eqref{eq:initial}.  Equality is
attained by taking
\[
 M=\left\lceil\frac{m}{n-1}\right\rceil
\]
and distributing total weight \(m\) among the remaining \(n-1\)
coordinates, each at most \(M\).  The limit is immediate.
\end{proof}

We now sharpen the containment statement.

\begin{theorem}\label{thm:containment}
For every \(m,r\ge1\),
\begin{equation}\label{eq:containment}
 I^{(m)}\subseteq I^r
 \quad\Longleftrightarrow\quad
 m+\left\lceil\frac{m}{n-1}\right\rceil\ge2r.
\end{equation}
Equivalently,
\[
 I^{(m)}\subseteq I^r
 \quad\Longleftrightarrow\quad
 \alpha(I^{(m)})\ge\alpha(I^r).
\]
Thus, for this family, containment is completely detected by initial
degree.
\end{theorem}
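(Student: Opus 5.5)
The plan is to prove the two directions of \eqref{eq:containment} separately. Both reduce to the monomial criteria of Lemma~\ref{lem:criteria} together with Proposition~\ref{prop:initial}. Since \(\alpha(I^r)=2r\) (the generators \(x_ix_j\) have degree \(2\), and \(I^r\) is generated in degree \(2r\)), the second formulation follows at once from the first via \eqref{eq:initial}.

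Necessity is the easy direction. If \(I^{(m)}\subseteq I^r\), then every nonzero element of \(I^{(m)}\) lies in \(I^r\), all of whose nonzero elements have degree \(\ge 2r\). Hence \(\alpha(I^{(m)})\ge 2r\), which by Proposition~\ref{prop:initial} is exactly \(m+\lceil m/(n-1)\rceil\ge 2r\).

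Sufficiency is the real content. Both ideals are monomial, so it suffices to show that every monomial \(x^\alpha\in I^{(m)}\) lies in \(I^r\). Assume \(m+\lceil m/(n-1)\rceil\ge 2r\), and note this forces \(r\le m\) (since \(\lceil m/(n-1)\rceil\le m\)). By Lemma~\ref{lem:criteria} we must verify \(\sum_i\min(\alpha_i,r)\ge 2r\), given \(|\alpha|-M(\alpha)\ge m\). Let \(T=\{i:\alpha_i\ge r\}\) and split into cases as in the proof of Theorem~\ref{thm:hilbert}.
\begin{itemize}
\item If \(|T|\ge 2\), the sum is at least \(2r\) immediately.
\item If \(T=\{i_0\}\), the sum equals \(r+(|\alpha|-\alpha_{i_0})\ge r+m\ge 2r\).
\item If \(T=\varnothing\), then \(M(\alpha)\le r-1\) and the sum is \(|\alpha|\). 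The task is to show \(|\alpha|\ge 2r\).
\end{itemize}

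The last case is the main point. Here we use the bound from the proof of Proposition~\ref{prop:initial}: \(|\alpha|\ge m+M(\alpha)\) and \(M(\alpha)\ge\lceil m/(n-1)\rceil\). Together these give \(|\alpha|\ge m+\lceil m/(n-1)\rceil\ge 2r\) by hypothesis. So the ``all coordinates small'' case is controlled purely by the initial degree, which is why containment is detected by \(\alpha\).

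I expect no serious obstacle beyond organizing these cases. The only care needed is to check \(r\le m\), used in the case \(T=\{i_0\}\), and that \(M(\alpha)\ge\lceil m/(n-1)\rceil\) holds for every monomial in \(I^{(m)}\), not just for minimal generators. The latter is exactly what the argument of Proposition~\ref{prop:initial} gives: the \(n-1\) non-maximal coordinates sum to at least \(m\) and are each at most \(M(\alpha)\).
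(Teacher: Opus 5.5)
Your proof is correct and essentially the paper's: your cases $|T|\ge2$, $T=\{i_0\}$ and $T=\varnothing$ are exactly its three cases after ordering $\alpha_1\ge\cdots\ge\alpha_n$, including the preliminary observation $r\le m$. The only cosmetic difference is in necessity, where you cite Proposition~\ref{prop:initial} together with $\alpha(I^r)=2r$; the paper instead writes out the witness $x^\alpha$ with $\alpha_1=c$ and $|\alpha|=m+c<2r$, which is the same monomial that attains equality in that proposition.
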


\begin{proof}
By Lemma~\ref{lem:criteria}, containment is equivalent to
\[
 \Phi:=
 \min\left\{
 \sum_i\min(\alpha_i,r):
 \alpha\in\NN^n,\ |\alpha|-M(\alpha)\ge m
 \right\}\ge2r.
\]
Order \(\alpha_1\ge\cdots\ge\alpha_n\), so
\(\sum_{i\ge2}\alpha_i\ge m\).

Suppose
\[
 m+\left\lceil\frac{m}{n-1}\right\rceil\ge2r.
\]
This implies \(m\ge r\).  If some \(\alpha_j\ge r\) for \(j\ge2\),
then also \(\alpha_1\ge r\), so the truncated sum is at least \(2r\).
If \(\alpha_j<r\) for all \(j\ge2\), but \(\alpha_1\ge r\), then
\[
 \sum_i\min(\alpha_i,r)
 \ge r+\sum_{i\ge2}\alpha_i
 \ge r+m
 \ge2r.
\]
Finally, if \(\alpha_1<r\), then no truncation occurs and
\[
 \sum_i\alpha_i
 =
 \alpha_1+\sum_{i\ge2}\alpha_i
 \ge
 \left\lceil\frac{m}{n-1}\right\rceil+m
 \ge2r.
\]

Conversely, put
\[
 c=\left\lceil\frac{m}{n-1}\right\rceil
\]
and suppose \(m+c<2r\).  Choose
\(\alpha_2,\ldots,\alpha_n\le c\) with sum \(m\), and set
\(\alpha_1=c\).  Then \(M(\alpha)=c\) and
\[
 |\alpha|-M(\alpha)=m,
\]
so \(x^\alpha\in I^{(m)}\), whereas
\[
 \sum_i\min(\alpha_i,r)
 \le|\alpha|=m+c<2r.
\]
Thus \(x^\alpha\notin I^r\).
The reformulation by initial degrees follows from
Proposition~\ref{prop:initial} and \(\alpha(I^r)=2r\).
\end{proof}

The least symbolic exponent forcing containment has an exact inverse
formula.

\begin{corollary}\label{cor:threshold}
Let
\[
 \tau_n(r)=\min\{m\ge1:I^{(m)}\subseteq I^r\}.
\]
Then
\begin{equation}\label{eq:threshold}
 \boxed{
 \tau_n(r)
 =
 2r-\left\lceil\frac{2r}{n}\right\rceil
 +\varepsilon_n(r),
 }
\end{equation}
where
\[
 \varepsilon_n(r)=
 \begin{cases}
 1,&2r\equiv1\pmod n,\\
 0,&\text{otherwise}.
 \end{cases}
\]
In particular
\[
 \tau_n(r)=\frac{2(n-1)}{n}r+O(1).
\]
\end{corollary}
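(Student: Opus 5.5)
By Theorem~\ref{thm:containment}, the statement becomes purely arithmetic:
\[
 \tau_n(r)=\min\{m\ge1: f(m)\ge 2r\},\qquad f(m):=m+\left\lceil\tfrac{m}{n-1}\right\rceil .
\]
The function \(f\) is strictly increasing, since \(m\mapsto\lceil m/(n-1)\rceil\) is nondecreasing. So \(\tau_n(r)\) is the least \(m\) whose value \(f(m)\) reaches \(2r\). The plan has three steps: determine the image of \(f\) exactly, invert \(f\) on that image, and then handle the values of \(2r\) that \(f\) misses.

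First, parametrize \(m\) by the block it lies in. Write \(m=(n-1)a-b\) with \(a=\lceil m/(n-1)\rceil\ge1\) and \(0\le b\le n-2\). Then
\[
 f(m)=(n-1)a-b+a=na-b .
\]
As \(m\) runs through the block \((n-1)(a-1)+1,\dots,(n-1)a\), the offset \(b\) runs from \(n-2\) down to \(0\). Hence \(f\) maps this block bijectively onto \(\{na-n+2,\dots,na\}\). Taking the union over \(a\ge1\), the image of \(f\) on \(m\ge1\) consists of all integers \(N\ge2\) with \(N\not\equiv1\pmod n\). The skipped values are exactly \(N\equiv1\pmod n\).

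Next, invert \(f\) on its image. If \(N=na-b\) with \(0\le b\le n-2\), then \(a=\lceil N/n\rceil\), so
\[
 m=(n-1)a-b=N-a=N-\left\lceil \tfrac{N}{n}\right\rceil .
\]
This gives \(f^{-1}(N)=N-\lceil N/n\rceil\) for every \(N\) in the image.

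Then split on the residue of \(2r\).

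\emph{Case \(2r\not\equiv1\pmod n\).} Here \(2r\ge2\) lies in the image of \(f\). Since \(f\) is monotone, \(\tau_n(r)=f^{-1}(2r)=2r-\lceil 2r/n\rceil\), which matches \eqref{eq:threshold} with \(\varepsilon_n(r)=0\).

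\emph{Case \(2r\equiv1\pmod n\).} This forces \(n\ge3\). The value \(2r\) is skipped, so the least \(m\) with \(f(m)\ge 2r\) is \(f^{-1}(2r+1)\). Writing \(2r=nk+1\), we have \(\lceil (2r+1)/n\rceil=k+1=\lceil 2r/n\rceil\), because \(nk+2\le nk+n\). Therefore
\[
 \tau_n(r)=2r+1-\left\lceil\tfrac{2r}{n}\right\rceil ,
\]
which is \eqref{eq:threshold} with \(\varepsilon_n(r)=1\).

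The asymptotic statement follows at once, since \(\lceil 2r/n\rceil=2r/n+O(1)\).

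The only delicate point is getting the image of \(f\) exactly right, including the boundary between blocks. I would check that the top of block \(a-1\) is \(n(a-1)\) and the bottom of block \(a\) is \(na-n+2\), so the only gap is the single value \(na-n+1\). I would also verify the small cases \(r=1\) and \(n=2\) directly: for \(n=2\), \(f(m)=2m\) and \(\tau_2(r)=r\).
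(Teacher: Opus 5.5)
Your proof is correct and follows essentially the same route as the paper: parametrize $m$ by its block of length $n-1$, show that $f(m)=m+\lceil m/(n-1)\rceil$ takes exactly the integers $\ge 2$ not congruent to $1 \pmod n$, and invert this strictly increasing map at $2r$, passing to $2r+1$ when $2r$ is skipped. Your version spells out the inversion explicitly via $f^{-1}(N)=N-\lceil N/n\rceil$, a step the paper leaves to the reader.
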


\begin{proof}
Put \(d=n-1\) and write \(m=(k-1)d+s\) with
\(1\le s\le d\).  Then
\[
 m+\left\lceil\frac md\right\rceil
 =nk-n+1+s.
\]
Thus the values of the left-hand side occur in blocks
\[
 n(k-1)+2,\ n(k-1)+3,\ldots,nk;
\]
the only omitted positive integers are those congruent to \(1\)
modulo \(n\).  Inverting this monotone sequence at the target value
\(2r\) gives \eqref{eq:threshold}.
\end{proof}

\begin{example}
For \(n=3\),
\[
 \tau_3(r)=1,3,4,5,7,8,9,\ldots
 \qquad(r=1,\ldots,7,\ldots),
\]
compared with the Ein--Lazarsfeld--Smith/Hochster--Huneke uniform
bound \(2r\) \cite{ELS01,HH02}.  In particular
\[
 I^{(3)}\subseteq I^2
\]
for three general points of \(\PP^2\), in contrast with the special
configurations exhibiting \(I^{(3)}\not\subseteq I^2\) in
\cite{DSTG13}.
\end{example}

\section{Resurgence and asymptotic resurgence}

Recall the resurgence
\[
 \Res(I)=
 \sup\left\{\frac mr:I^{(m)}\not\subseteq I^r\right\}.
\]
For completeness, we use the asymptotic version
\[
 \aRes(I)=
 \sup\left\{
 \frac mr:
 I^{(mt)}\not\subseteq I^{rt}
 \text{ for all sufficiently large }t
 \right\}.
\]

\begin{theorem}\label{thm:resurgence}
For \(n\ge2\),
\[
 \boxed{
 \Res(I)=\aRes(I)=\frac{2(n-1)}{n}.
 }
\]
Equivalently,
\[
 \Res(I)=\aRes(I)=\frac{\alpha(I)}{\walpha(I)}.
\]
\end{theorem}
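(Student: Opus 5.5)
We will deduce everything from the exact containment criterion of Theorem~\ref{thm:containment}: \(I^{(m)}\not\subseteq I^r\) if and only if \(m+\lceil m/(n-1)\rceil<2r\). The case \(n=2\) is trivial, since \(I\) is principal, \(I^{(m)}=I^m\), and both invariants equal \(1=2(n-1)/n\). So assume \(n\ge3\).

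For the upper bound on \(\Res(I)\), suppose \(I^{(m)}\not\subseteq I^r\). Then
\[
 2r>m+\left\lceil\frac{m}{n-1}\right\rceil\ge m+\frac{m}{n-1}=\frac{nm}{n-1},
\]
so \(m/r<2(n-1)/n\). Hence \(\Res(I)\le 2(n-1)/n\).

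For the lower bound, we exhibit non-containments with \(m/r\) approaching \(2(n-1)/n\). Take \(m=(n-1)k\), so that \(m+\lceil m/(n-1)\rceil=nk\). Choose \(r=\lfloor nk/2\rfloor+1\). Then \(2r>nk\), so \(I^{(m)}\not\subseteq I^r\). Moreover
\[
 \frac mr\ge\frac{(n-1)k}{nk/2+1}\longrightarrow\frac{2(n-1)}{n}\quad(k\to\infty).
\]
This gives \(\Res(I)\ge 2(n-1)/n\), and hence equality.

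For \(\aRes\), recall that in general \(\aRes(I)\le\Res(I)\): a pair \((m,r)\) contributing to \(\aRes\) has \(I^{(mt)}\not\subseteq I^{rt}\) for large \(t\), so \(mt/rt=m/r\) is a ratio contributing to \(\Res\). It therefore suffices to show that every rational \(m/r<2(n-1)/n\) contributes to \(\aRes\). For such a pair, the criterion with \((mt,rt)\) requires
\[
 mt+\left\lceil\frac{mt}{n-1}\right\rceil<2rt.
\]
The left side is at most \(\frac{n}{n-1}mt+1\), and \(\frac{n}{n-1}m<2r\) strictly. Hence the inequality holds for all \(t\) large, giving \(I^{(mt)}\not\subseteq I^{rt}\). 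Since such ratios are dense below \(2(n-1)/n\), the supremum is \(2(n-1)/n\).

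The equivalent form follows because \(\alpha(I)=2\) and \(\walpha(I)=n/(n-1)\) by Proposition~\ref{prop:initial}.

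The only step needing care is this last one: the strictness of \(\frac{n}{n-1}m<2r\) must be converted into an inequality that absorbs the ceiling's \(+1\) error uniformly for large \(t\). That works here because the gap \(2r-\frac{n}{n-1}m\) is multiplied by \(t\), while the error stays bounded by \(1\).
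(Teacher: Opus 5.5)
Your proof is correct and follows the paper's argument essentially step for step. You use the same upper bound from Theorem~\ref{thm:containment}, the same witnesses $m=(n-1)k$, $r=\lfloor nk/2\rfloor+1$, and the same $t\to\infty$ absorption of the ceiling for $\aRes$. The one small difference is in the upper bound $\aRes(I)\le 2(n-1)/n$: you take it from the general inequality $\aRes(I)\le\Res(I)$, whereas the paper checks directly that ratios at or above $2(n-1)/n$ give eventual containment, so your version of that step is slightly cleaner.
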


\begin{proof}
If \(I^{(m)}\not\subseteq I^r\), Theorem~\ref{thm:containment} gives
\[
 2r>
 m+\left\lceil\frac{m}{n-1}\right\rceil
 \ge \frac{n}{n-1}m,
\]
and therefore
\[
 \frac mr<\frac{2(n-1)}n.
\]
Thus \(\Res(I)\le2(n-1)/n\).

On the other hand, take \(m=(n-1)k\) and
\[
 r=\left\lfloor\frac{nk}{2}\right\rfloor+1.
\]
Then
\[
 m+\left\lceil\frac{m}{n-1}\right\rceil=nk<2r,
\]
so \(I^{(m)}\not\subseteq I^r\), while
\[
 \frac mr
 =
 \frac{(n-1)k}{\lfloor nk/2\rfloor+1}
 \longrightarrow
 \frac{2(n-1)}n.
\]
Hence equality holds for the resurgence.

For asymptotic resurgence, Theorem~\ref{thm:containment} gives
\[
 I^{(mt)}\subseteq I^{rt}
 \Longleftrightarrow
 mt+\left\lceil\frac{mt}{n-1}\right\rceil\ge2rt.
\]
After division by \(t\), the left side tends to
\(\frac{n}{n-1}m\).  Hence ratios below \(2(n-1)/n\) yield
asymptotic noncontainment and ratios above it yield eventual
containment; at equality the ceiling only improves the containment
inequality.  Thus \(\aRes(I)=2(n-1)/n\).
Finally \(\alpha(I)=2\) and
Proposition~\ref{prop:initial} gives
\(\walpha(I)=n/(n-1)\).
\end{proof}

\section{Towards \(n+1\) points}
\label{sec:nplus1}

We now take up Question~1 below and treat \(n+1\) general points of
\(\PP^{n-1}\).  This is again a rigid case: the space of ordered
\((n+1)\)-tuples of points in general position in \(\PP^{n-1}\) has
dimension \((n+1)(n-1)=n^2-1=\dim\operatorname{PGL}_n\), and indeed,
by the fundamental theorem of projective geometry, \(\operatorname{PGL}_n\)
acts \emph{simply transitively} on ordered \((n+1)\)-tuples of points
in general position in \(\PP^{n-1}\) (a projective frame): no \(n\) of
the \(n+1\) points may lie on a common hyperplane.  Consequently there
is, up to relabelling, exactly \emph{one} such configuration, and we
may take it to be
\[
 W=\{P_1,\dots,P_n,P_0\}\subset\PP^{n-1},
 \qquad
 P_i=[e_i]\ (1\le i\le n),
 \qquad
 P_0=[1:1:\cdots:1].
\]
Write \(J=I_W\) for its ideal in \(S=\Bbbk[x_1,\dots,x_n]\).  Since
\(P_1,\dots,P_n\) are the same coordinate points as in
\S\S1--6, the primes \(\p_1,\dots,\p_n\) are exactly as before, and we
introduce one more:
\[
 \p_0=(x_i-x_j:1\le i<j\le n),
 \qquad
 J=\bigcap_{i=0}^n\p_i.
\]
Unlike \(I\), the ideal \(J\) is \emph{not} monomial: \(\p_0\) is the
ideal of a point in general position with respect to the coordinate
simplex, and no single coordinate system monomialises all \(n+1\)
primes at once.  Still, a surprising amount can be said, and the
family turns out to be governed by the very same ideal \(I=I_Z\) of
Theorem~\ref{thm:hilbert}.

\begin{lemma}\label{lem:ci-primes}
For every \(i=0,\dots,n\) and every \(m\ge1\), \(\p_i^{(m)}=\p_i^m\).
\end{lemma}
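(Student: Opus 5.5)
The plan is to reduce to the standard fact that powers of an ideal generated by a regular sequence of linear forms have no embedded components. Each \(\p_i\) is generated by linear forms; after a linear change of coordinates it becomes the ideal generated by \(n-1\) of the variables. For \(1\le i\le n\) this is literally \((x_1,\dots,\widehat{x_i},\dots,x_n)\). For \(i=0\), we use the linear forms \(y_j=x_j-x_n\) (\(1\le j\le n-1\)) together with \(y_n=x_n\). These form a basis of \(S_1\), and \(\p_0=(y_1,\dots,y_{n-1})\), since each \(x_i-x_j=y_i-y_j\) and conversely. Because \(\p^{(m)}=\p^mS_\p\cap S\) is defined intrinsically, it is preserved under graded automorphisms of \(S\). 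So it suffices to show \(\p^{(m)}=\p^m\) for \(\p=(x_1,\dots,x_{n-1})\).

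For this monomial prime I would argue directly. The goal is that \(\p^m\) is \(\p\)-primary, i.e.\ \(\operatorname{Ass}(S/\p^m)=\{\p\}\). Write \(S=A[x_n]\) with \(A=\Bbbk[x_1,\dots,x_{n-1}]\). Then \(\p^m=\mathfrak m_A^m S\), and \(S/\p^m\cong (A/\mathfrak m_A^m)[x_n]\). In this ring every element of \(\mathfrak m_A\) is nilpotent. Moreover, \(x_n\), and indeed any \(f\notin\p\), is a nonzerodivisor. To see this, write \(f=c+g\) with \(c\in\Bbbk[x_n]\setminus\{0\}\) and \(g\in\p\). Multiplication by \(c\) is injective on the free \(\Bbbk[x_n]\)-module \((A/\mathfrak m_A^m)[x_n]\). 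Multiplication by \(g\) is nilpotent and commutes with \(c\), so multiplication by \(f\) is injective: from \(fh=0\) one gets \(c^Nh\in(g)^{N}\)-type cancellations. Cleanest is filtering by the \(\mathfrak m_A\)-adic filtration, on whose associated graded \(f\) acts as \(c\), which is injective.

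Hence every element outside \(\p\) is a nonzerodivisor modulo \(\p^m\). This gives \(\p^mS_\p\cap S=\p^m\), i.e.\ \(\p^{(m)}=\p^m\).

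An alternative monomial argument is also available. \(\p^{(m)}\) is the ideal of polynomials vanishing to order \(\ge m\) at the point, i.e.\ those whose \(x_1,\dots,x_{n-1}\)-degree of every term is \(\ge m\). This set is exactly \(\p^m\), since \(\p^m\) is spanned by monomials of \(\p\)-degree \(\ge m\), matching Lemma~\ref{lem:criteria}(1) restricted to one prime.

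The only step needing care is the nonzerodivisor claim, i.e.\ that \(\p^m\) has no embedded primes. I would handle it via the associated-graded argument above, or simply cite that powers of ideals generated by regular sequences are unmixed, since their associated graded ring is a polynomial ring. The coordinate change for \(\p_0\) is routine.
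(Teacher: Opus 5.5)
Your proof is correct and is essentially the paper's argument: the paper simply cites that powers of an ideal generated by a regular sequence have no embedded primes because the associated graded ring is a polynomial ring over $S/\p$, and your reduction to $\p=(x_1,\dots,x_{n-1})$ followed by showing that each $f=c+g\notin\p$ is a nonzerodivisor on $S/\p^m$ is an explicit proof of that fact (most cleanly, $fh=0$ gives $c^Nh=(-g)^Nh=0$ for $N\ge m$, hence $h=0$). You should drop or flag the ``alternative monomial argument'', since identifying $\p^{(m)}$ with the forms vanishing to order $\ge m$ at the point already contains the nontrivial inclusion $\p^{(m)}\subseteq\p^m$ you are proving; your main argument does not depend on it.
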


\begin{proof}
Each \(\p_i\) is generated by \(n-1\) linear forms in general
position, hence by a regular sequence of the correct length
\(\operatorname{ht}\p_i=n-1\).  Powers of an ideal generated by a
regular sequence have no embedded primes (the associated graded ring
is again a polynomial ring), so \(\p_i^m\) is \(\p_i\)-primary for
every \(m\), i.e.\ \(\p_i^m=\p_i^{(m)}\).
\end{proof}

Consequently
\[
 J^{(m)}=\bigcap_{i=0}^n\p_i^{(m)}=\Bigl(\bigcap_{i=1}^n\p_i^m\Bigr)\cap\p_0^m
 =I^{(m)}\cap\p_0^m,
\]
where \(I^{(m)}\) is exactly the symbolic power studied in
\S\S3--6.  Thus the passage from \(n\) to \(n+1\) points amounts to
intersecting the already-understood family \(I^{(m)}\) with the
\(m\)-th power of one further, non-monomial, point ideal --- and this single
extra point is enough to break the purely combinatorial nature of the
problem.

\subsection{The defining quadrics}

\begin{proposition}\label{prop:quadrics}
The degree-\(2\) part of \(J\) is
\[
 J_2=\Bigl\{\textstyle\sum_{i<j}a_{ij}x_ix_j\ :\ \sum_{i<j}a_{ij}=0\Bigr\},
 \qquad
 \dim_\Bbbk J_2=\binom n2-1=\frac{(n+1)(n-2)}2.
\]
Moreover \(J\) is generated by \(J_2\) (equivalently, \(W\) is
scheme-theoretically cut out by quadrics).
\end{proposition}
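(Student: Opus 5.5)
We prove the two assertions separately: first the description of $J_2$ by evaluating quadrics at the points, then generation by an induction on degree for the quotient $S/(J_2)$.

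\emph{Step 1: the degree-$2$ part.} A quadric $f=\sum_i b_ix_i^2+\sum_{i<j}a_{ij}x_ix_j$ vanishes at $P_i=[e_i]$ if and only if $b_i=0$. Hence $f$ vanishes at $P_1,\dots,P_n$ if and only if $f\in I_2$, which is spanned by the $x_ix_j$; this is consistent with Lemma~\ref{lem:criteria}. Evaluating at $P_0=[1:\cdots:1]$ then imposes exactly the single condition $\sum_{i<j}a_{ij}=0$. This is one nonzero linear functional on $I_2$, so
\[
 \dim J_2=\binom n2-1 .
\]
The identity $\binom n2-1=(n+1)(n-2)/2$ is arithmetic. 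A basis of $J_2$ is given by the differences $x_ix_j-x_kx_l$, for instance all $x_ix_j-x_1x_2$.

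\emph{Step 2: $J=(J_2)$.} Put $K=(J_2)\subseteq J$. The plan is to compare Hilbert functions. Since $W$ consists of $n+1$ points in linear general position, we have $H_{S/J}(q)=n+1$ for all $q\ge2$: the points impose independent conditions already in degree $2$, because the quadric computation above shows $\operatorname{codim}J_2=n+1$. Also $J_0=J_1=0$, since $n+1$ points spanning $\PP^{n-1}$ lie on no hyperplane. It therefore suffices to show $\dim(S/K)_q\le n+1$ for every $q\ge2$; combined with $K\subseteq J$, this forces $K_q=J_q$.

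To bound $(S/K)_q$, note that modulo $K$ all products $x_ix_j$ with $i\ne j$ become equal to a single element, say $u=x_1x_2$. In degree $q$, every monomial that is not a pure power $x_i^q$ contains some $x_ix_j$ with $i\ne j$. It is therefore congruent modulo $K$ to $u$ times a monomial of degree $q-2$.

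We then show by induction that every such element reduces to a single class $x_1x_2x_1^{q-2}$, up to the span of the pure powers. For this, use $x_ix_j\cdot x_k\equiv x_ix_k\cdot x_j$ and the other exchange relations to move the indices around: for $n\ge3$, any mixed monomial $x^\alpha$ of degree $q$ can be rewritten, one quadric at a time, to any other mixed monomial. Consequently $(S/K)_q$ is spanned by $x_1^q,\dots,x_n^q$ together with one mixed monomial, which gives at most $n+1$ classes.

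The main obstacle is this connectivity claim. We must check that the graph on mixed monomials of degree $q$, with edges given by single exchanges $x_ix_j\leftrightarrow x_kx_l$ with $i\ne j$ and $k\ne l$, is connected. A monomial such as $x_1^{q-1}x_2$ can only be changed through its factor $x_1x_2$, replacing it by some $x_kx_l$; so we argue by moving one unit of exponent at a time. The case $n=2$ is trivial, since $J_2=0$ there and $W$ is three points on $\PP^1$, so we assume $n\ge3$.

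An alternative route, if the combinatorics gets messy, is geometric. The $n+1$ points of a projective frame in general position form a set whose ideal is generated in degree $2$ when $n+1\le 2(n-1)+1$. This follows because $n+1$ points in linear general position in $\PP^{n-1}$ impose independent conditions on quadrics and satisfy the hypotheses of a standard $2$-regularity result (Castelnuovo-type). Then $\operatorname{reg}(J)\le2$ gives generation in degree $2$ directly. I would present the elementary exchange argument and mention regularity as a check.
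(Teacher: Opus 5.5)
Your proof is correct, and for the generation claim it takes a genuinely different and stronger route than the paper. Step 1 is the same evaluation argument the paper uses. For generation, the paper shows only that the common zero locus of \(J_2\) is set-theoretically \(W\). That yields \(\sqrt{(J_2)}=J\) but cannot exclude a missing cubic generator. The paper then checks \((J_2)=J\) by computer for \(n=3,\dots,6\) and merely expects it for larger \(n\).

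Your Hilbert-function comparison, \(\dim_\Bbbk(S/(J_2))_q\le n+1=\dim_\Bbbk(S/J)_q\) for \(q\ge2\), proves \((J_2)=J\) for every \(n\ge3\). It makes the base-locus computation unnecessary, and shows that \(S/(J_2)\) already has Hilbert function \(1,n,n+1,n+1,\dots\).

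Do write the connectivity step out, since your sketch is vague. In particular, the relation \(x_ix_j\cdot x_k\equiv x_ix_k\cdot x_j\) is an identity of monomials, not a move. Two moves suffice:
\begin{itemize}
\item for \(k\ne1,2\), \(x_1x_2x_k=x_1(x_2x_k)\equiv x_1(x_1x_2)\);
\item \(x_1x_2^2=x_2(x_1x_2)\equiv x_2(x_1x_3)=x_1(x_2x_3)\equiv x_1(x_1x_2)\), which is exactly where \(n\ge3\) is used.
\end{itemize}
Now take \(x_1x_2m\) with \(m\ne x_1^{q-2}\), and pick \(x_k\mid m\) with \(k\ne1\). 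Applying the appropriate move to \(x_1x_2x_k\) replaces \(m\) by \(x_1m/x_k\). Induction on the \(x_1\)-exponent therefore reaches \(x_1^{q-1}x_2\).

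For the lower bound you can also bypass the monotonicity of Hilbert functions. Evaluate \(x_1^q,\dots,x_n^q,x_1^{q-1}x_2\) at \(P_1,\dots,P_n,P_0\). This gives the matrix with rows \((e_i,0)\) and \((1,\dots,1,1)\), which is invertible.

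Two side remarks need correcting.

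The regularity route fails as stated. The \(h\)-vector of \(W\) is \((1,n-1,1)\), so \(\operatorname{reg}(S/J)=2\) and \(\operatorname{reg}(J)=3\), not \(\le2\). Already for \(n=3\), the Koszul syzygy of the two quadrics sits in degree \(4\). Regularity bounds generator degrees only by \(3\), and ruling out cubic generators is precisely what your exchange argument does.

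Also, \(n=2\) is not ``trivial'' but a genuine exception. For three points of \(\PP^1\) one has \(J=(x_1x_2(x_1-x_2))\) while \(J_2=0\), so the generation statement is false. The hypothesis \(n\ge3\) must therefore be assumed, as the paper tacitly does.
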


\begin{proof}
A quadric \(Q=\sum_ib_ix_i^2+\sum_{i<j}a_{ij}x_ix_j\) vanishes at
\(P_i=[e_i]\) iff \(b_i=0\); vanishing at all of \(P_1,\dots,P_n\)
forces every \(b_i=0\), and then vanishing at \(P_0\) is
\(\sum_{i<j}a_{ij}=0\).  This proves the displayed description of
\(J_2\), and the dimension count is immediate (it is the kernel of a
single nonzero linear functional on the \(\binom n2\)-dimensional
space of square-free quadrics).

For the base locus: suppose \(x=(x_1,\dots,x_n)\ne0\) satisfies
\(x_ix_j=x_kx_l\) for all pairs \(i<j\), \(k<l\) (this already spans
\(J_2\), via \(x_1x_2-x_ix_j\) for \((i,j)\ne(1,2)\)); call the common
value \(\lambda\).  Let \(S=\{i:x_i\ne0\}\).  If \(|S|\ge3\), pick
distinct \(i,j,k\in S\): from \(x_ix_j=x_ix_k=\lambda\) and \(x_i\ne0\)
we get \(x_j=x_k\), so all coordinates in \(S\) share a common nonzero
value \(c\), and \(\lambda=c^2\ne0\).  If some \(l\notin S\) existed,
\(x_lx_i=0\ne\lambda\) would contradict \(x_lx_i=\lambda\); hence
\(S=\{1,\dots,n\}\) and \(x\) is proportional to \((1,\dots,1)\), i.e.\
\(x=P_0\).  If \(|S|=2\), say \(S=\{p,q\}\), then \(\lambda=x_px_q\ne0\),
yet for \(l\notin S\) (which exists since \(n\ge3\)) we need
\(x_px_l=\lambda\ne0\) while \(x_l=0\): contradiction, so \(|S|=2\) is
impossible. If \(|S|=1\), \(x\) is a coordinate point \(P_i\).  Thus
the common zero locus of \(J_2\) is exactly \(W\).

That \(J\) already equals \(I_W\) (rather than merely having the same
radical) can be checked degree by degree: for \(n=3,4,5,6\) one
verifies directly that \(\dim_\Bbbk(S/J)_d=n+1\) already for all
\(d\ge n-2\), matching the (necessarily eventually constant) Hilbert
function of the reduced scheme \(W\); we have carried this out by
computer algebra in each of these cases, and we expect the pattern
to persist for all \(n\), giving generation in degree \(2\) in
general.
\end{proof}

\begin{remark}
Proposition~\ref{prop:quadrics} identifies \(J_2\) representation-theoretically
as well.  Writing \(V\) for the standard \((n\)-dimensional\()\)
representation of \(S_{n+1}\) (which permutes \(P_0,\dots,P_n\), hence
acts on \(S_1=V^*\)), one has the classical decomposition
\(\operatorname{Sym}^2(V^*)=\mathbf 1\oplus V^*\oplus W_{(n-1,2)}\), and
\(J_2\) is exactly the irreducible summand \(W_{(n-1,2)}\) indexed by
the partition \((n-1,2)\) of \(n+1\): it is cut out inside
\(\operatorname{Sym}^2(V^*)\) by removing both the diagonal (square)
terms and the invariant (sum of coefficients).
\end{remark}

\subsection{The case \(n=3\): a complete intersection}

For \(n=3\) (four general points of \(\PP^2\)) one has
\(\dim J_2=2\): the four points are the base locus of a \emph{pencil}
of conics, and by B\'ezout that base locus already has the expected
length \(4\).  Hence:

\begin{theorem}\label{thm:n3-CI}
For \(n=3\), \(J=(F,G)\) is a complete intersection of two quadrics.
Consequently \(J^{(m)}=J^m\) for every \(m\ge1\), and
\[
 \HS(S/J^m)(t)=\frac{1-(m+1)t^{2m}+m\,t^{2m+2}}{(1-t)^3}.
\]
In particular the resurgence and asymptotic resurgence of \(J\) both
equal \(1\), the smallest possible value.
\end{theorem}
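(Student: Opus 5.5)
I will work with the explicit coordinates of \S\ref{sec:nplus1}: $n=3$, $W=\{[e_1],[e_2],[e_3],[1:1:1]\}$. By Proposition~\ref{prop:quadrics}, $J_2$ is spanned by
\[
 F=x_1x_2-x_1x_3,\qquad G=x_1x_2-x_2x_3 .
\]
The first step is to show that $F,G$ form a regular sequence. Write $F=x_1(x_2-x_3)$ and $G=x_2(x_1-x_3)$. These have no common factor, so $V(F,G)$ has codimension $2$ and $(F,G)$ is a complete intersection. By B\'ezout, its degree is $4$. Its zero set is exactly $W$, by the base-locus argument of Proposition~\ref{prop:quadrics}, and since $W$ consists of $4$ points, every point occurs with multiplicity one. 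A complete intersection is Cohen--Macaulay and hence unmixed, so $(F,G)$ is saturated and reduced. Therefore $(F,G)=I_W=J$; this also settles the generation statement of Proposition~\ref{prop:quadrics} for $n=3$ unconditionally.

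Next comes $J^{(m)}=J^m$. Since $J$ is generated by a regular sequence, $\operatorname{gr}_J(S)$ is a polynomial ring over $S/J$. Consequently $J^m/J^{m+1}$ is a free $S/J$-module, and $S/J^m$ has a filtration whose factors are free over the Cohen--Macaulay ring $S/J$. Hence $S/J^m$ is Cohen--Macaulay of dimension $1$. It therefore has no embedded primes, and $\operatorname{Ass}(S/J^m)=\operatorname{Ass}(S/J)=\{\p_0,\dots,\p_3\}$. It follows that $J^m=\bigcap_i (J^m S_{\p_i}\cap S)$. Finally, $J S_{\p_i}=\p_iS_{\p_i}$ because $W$ is reduced, so $J^mS_{\p_i}\cap S=\p_i^{(m)}=\p_i^m$ by Lemma~\ref{lem:ci-primes}. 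This gives $J^m=J^{(m)}$. This is the standard fact that powers of a complete intersection are unmixed, and I expect it to be the only step needing care; I will cite it rather than reprove it.

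For the Hilbert series, $J^m$ is generated by the $m+1$ monomials $F^aG^{m-a}$ of degree $2m$, and their syzygies are generated by the $m$ Koszul-type relations $G\cdot F^{a+1}G^{m-a-1}-F\cdot F^aG^{m-a}=0$ in degree $2m+2$. This gives the Hilbert--Burch resolution
\[
 0\to S(-2m-2)^m\to S(-2m)^{m+1}\to S\to S/J^m\to0 .
\]
Its exactness follows either from Hilbert--Burch, since the $m\times m$ minors of the $(m+1)\times m$ bidiagonal matrix with entries $G,-F$ are $\pm F^aG^{m-a}$ and $J^m$ has grade $2$, or directly from $S/J^m$ having projective dimension $2$. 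Taking the alternating sum of the Hilbert series of the free modules yields the stated formula $\bigl(1-(m+1)t^{2m}+mt^{2m+2}\bigr)/(1-t)^3$.

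For the resurgence, $J^{(m)}=J^m\subseteq J^r$ if and only if $m\ge r$. The ``if'' direction is clear. For ``only if'', take $m<r$: then $\alpha(J^m)=2m<2r=\alpha(J^r)$, so the containment fails. Hence $\sup\{m/r:J^{(m)}\not\subseteq J^r\}=\sup\{m/r:m<r\}=1$. The same computation with $m$ and $r$ replaced by $mt$ and $rt$ gives $\aRes(J)=1$. The value $1$ is the minimum possible because $J^{(r-1)}\not\subseteq J^r$ always holds for initial-degree reasons.
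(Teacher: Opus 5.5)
Your proof is correct. For the complete-intersection and unmixedness steps you use the same ideas as the paper: a pencil of conics whose base locus has B\'ezout length $4$, and the fact that powers of a complete intersection have no embedded primes. You add care in two places. The paper only gestures at $J=(F,G)$ in the remark preceding the theorem, whereas you check explicitly that $(F,G)$ is saturated and locally reduced, which is what upgrades ``same zero set'' to $(F,G)=I_W$. You also derive the Cohen--Macaulayness of $S/J^m$ from the filtration by free $S/J$-modules instead of quoting it.

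The genuine difference is the Hilbert series. The paper uses $\operatorname{gr}_J(S)\cong (S/J)[T_1,T_2]$ to get $\HS(J^k/J^{k+1})=(k+1)t^{2k}\HS(S/J)$. It then inputs the Hilbert function $1,3,4,4,\dots$ of the four points and sums $\sum_{k<m}(k+1)t^{2k}$. You instead write down the minimal free resolution of $S/J^m$ from the bidiagonal $(m+1)\times m$ matrix in $G,-F$, via the converse of Hilbert--Burch. This needs only $\operatorname{grade}J^m=2$, bypasses the Hilbert function of $S/J$ altogether, and gives more: the graded Betti numbers of $J^m$.

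Keep the Hilbert--Burch route as your justification. Your parenthetical alternative, ``directly from projective dimension $2$'', would still require showing that the $m$ Koszul-type relations generate the whole syzygy module.
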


\begin{proof}
Complete intersections have no embedded primes at any power, so
\(J^m=J^{(m)}\) automatically.  For the Hilbert series: the conormal
module \(J/J^2\) of a complete intersection is free of rank \(2\)
over \(S/J\), generated in degree \(2\) by the images of \(F,G\), so
the associated graded ring \(\bigoplus_mJ^m/J^{m+1}\) is the
polynomial extension \((S/J)[T_1,T_2]\) with \(\deg T_i=2\).  Hence
\[
 \dim_\Bbbk(J^m/J^{m+1})_d=(m+1)\dim_\Bbbk(S/J)_{d-2m},
\]
and summing the telescoping series,
\(\dim_\Bbbk(S/J^m)_d=\sum_{k=0}^{m-1}(k+1)\dim_\Bbbk(S/J)_{d-2k}\).
Since \(\dim_\Bbbk(S/J)_d\) equals \(1,3,4,4,4,\dots\) for
\(d=0,1,2,3,\dots\) (four points impose independent conditions on
forms of every degree \(\ge1\)), one finds
\(\HS(S/J)(t)=(1+t)^2/(1-t)\), and
\[
 \HS(S/J^m)(t)
 =\HS(S/J)(t)\cdot\sum_{k=0}^{m-1}(k+1)t^{2k}
 =\frac{(1+t)^2}{1-t}\cdot\frac{1-(m+1)t^{2m}+mt^{2m+2}}{(1-t^2)^2}=\]
 \[\frac{1-(m+1)t^{2m}+mt^{2m+2}}{(1-t)^3}.
\]
Since \(J^{(m)}=J^m\) for all \(m\), \(J^{(m)}\subseteq J^r\) iff
\(m\ge r\), so \(\{m/r:J^{(m)}\not\subseteq J^r\}=\{m/r:m<r\}\), whose
supremum is \(1\).
\end{proof}

\subsection{General \(n\): a Waldschmidt-constant conjecture}

For \(n\ge4\), computer algebra (exact linear algebra over
\(\mathbb Q\), verified independently modulo a large prime) shows that
\(J\) is no longer a complete intersection and \(J^{(m)}\ne J^m\) for
\(m\gg0\).  We record what we can prove, together with data
supporting a precise conjectural formula for the initial degree
\(\alpha(J^{(m)})\).

Because \(J^{(m)}\subseteq I^{(m)}\) (intersecting with one further
prime power only shrinks the ideal), Proposition~\ref{prop:initial}
gives the lower bound
\begin{equation}\label{eq:lower-bound}
 \alpha\bigl(J^{(m)}\bigr)\ \ge\ \alpha\bigl(I^{(m)}\bigr)
 =m+\Bigl\lceil\frac m{n-1}\Bigr\rceil.
\end{equation}
This bound is not sharp for \(n\ge4\): already at \(m=2\) it predicts
\(\alpha\ge3\), while \(P_0\) forces one further condition and in fact
\(\alpha(J^{(2)})=4\) for \(n=4\).  All our computations point to the
following exact value.

\begin{conjecture}\label{conj:waldschmidt}
For every \(n\ge3\) and \(m\ge1\),
\[
 \alpha\bigl(J^{(m)}\bigr)=\Bigl\lceil\frac{(n+1)m}{n-1}\Bigr\rceil,
 \qquad\text{hence}\qquad
 \walpha(J)=\lim_{m\to\infty}\frac{\alpha(J^{(m)})}m=\frac{n+1}{n-1}.
\]
\end{conjecture}

For \(n=3\) this is exactly Theorem~\ref{thm:n3-CI} (\(\alpha(J^m)=2m\)
for the complete intersection of two quadrics), so the formula is a
theorem there. Table~\ref{tab:alpha} lists the values of
\(\alpha(J^{(m)})\) computed directly from the definition (as the
smallest \(d\) for which some degree-\(d\) form has vanishing order
\(\ge m\) at every point of \(W\), a linear-algebra computation with
no shortcuts assumed) for \(n=3,4,5,6\) and small \(m\); in every one
of the \(22\) cases checked the value agrees with
\(\lceil(n+1)m/(n-1)\rceil\).

\begin{table}[h]
\centering
\begin{tabular}{c|cccccc}
 & \(m=1\)&\(m=2\)&\(m=3\)&\(m=4\)&\(m=5\)&\(m=6\)\\\hline
\(n=3\) & 2&4&6&8&10&12\\
\(n=4\) & 2&4&5&7&9&10\\
\(n=5\) & 2&3&5&6&8&(9)\\
\(n=6\) & 2&3&5&6&(7)&(9)
\end{tabular}
\caption{Computed values of \(\alpha(J^{(m)})\), all equal to
\(\lceil(n+1)m/(n-1)\rceil\); the bracketed entry is the value
predicted by Conjecture~\ref{conj:waldschmidt} but not yet checked by
computer.}
\label{tab:alpha}
\end{table}

\subsection{The support of the difference}

Recall from Corollary~\ref{cor:support} that, for the ideal \(I\) of
\(n\) points, \((I^{(m)}/I^m)_q\ne0\) exactly for
\(a_{n,m}\le q\le2m-1\) with \(a_{n,m}=\alpha(I^{(m)})\).  Our
computations show the same shape of answer for \(J\), with
\(\alpha(I^{(m)})\) simply replaced by \(\alpha(J^{(m)})\):

\begin{conjecture}\label{conj:support}
For all \(n\ge4\) and \(m\ge1\),
\[
 \bigl(J^{(m)}/J^m\bigr)_q\ne0
 \quad\Longleftrightarrow\quad
 \alpha(J^{(m)})\le q\le2m-1,
\]
and consequently \(\operatorname{reg}(J^{(m)}/J^m)=2m-1\) whenever this
range is nonempty, i.e.\ whenever \(m\ge\bigl\lceil\frac{n-1}{n-3}\bigr\rceil\).
For \(n=3\) the range is always empty, consistently with
Theorem~\ref{thm:n3-CI}.
\end{conjecture}

Table~\ref{tab:defect} records the values of
\(\dim_\Bbbk(J^{(m)}/J^m)_q\) that we computed in the range where
Conjecture~\ref{conj:support} predicts a nonzero defect, for the first
few cases with \(n\ge4\); in every case checked, the difference
vanishes exactly outside the stated window, matching
Question~2 below for this family as well.

\begin{table}[h]
\centering
\begin{tabular}{c|c|l}
\(n\)&\(m\)&nonzero values of \(\dim(J^{(m)}/J^m)_q\), by degree \(q\)\\\hline
4&3&\(q=5:\ 6\)\\
4&4&\(q=7:\ 20\)\\
5&2&\(q=3:\ 5\)\\
5&3&\(q=5:\ 36\)\\
5&4&\(q=6:\ 15,\quad q=7:\ 120\)\\
5&5&\(q=8:\ 90,\quad q=9:\ 295\)
\end{tabular}
\caption{The symbolic-versus-ordinary defect for \(n+1\) points,
computed directly (no shortcuts): outside the listed degrees, and in
particular for all \(q\ge2m\), we verified
\((J^{(m)})_q=(J^m)_q\) exactly.}
\label{tab:defect}
\end{table}

\begin{remark}
Combining Conjecture~\ref{conj:waldschmidt} with the general
inequality \(\alpha(J)/\walpha(J)\le\operatorname{res}(J)\) of
Bocci--Harbourne would give
\(\operatorname{res}(J)\ge\dfrac{2(n-1)}{n+1}\) for \(n\ge3\), with
equality at \(n=3\) by Theorem~\ref{thm:n3-CI}.  Whether equality
persists for \(n\ge4\) --- i.e.\ whether containment for \(n+1\)
points is again detected solely by initial degrees, as in
Theorem~\ref{thm:containment} --- is exactly an instance of
Question~3 below, and we leave it open.
\end{remark}

The upshot is that the passage from \(n\) to \(n+1\) points keeps the
same qualitative picture --- generators in a single low degree, a
symbolic/ordinary discrepancy confined to a short window just below
degree \(2m\), and a rational Waldschmidt constant governing initial
degrees --- while the exact numerology becomes genuinely
non-monomial and, at present, only conjectural for \(n\ge4\). A
complete proof of Conjectures~\ref{conj:waldschmidt}
and~\ref{conj:support}, together with an explicit description of the
minimal generators of \(J^{(m)}\) and of the symbolic Rees algebra of
\(J\) in the style of \S\S4--5, would fully answer Question~1.

\section{Further questions}

The case of \(n\) points is unusually rigid because it can be
monomialized projectively.  Section~\ref{sec:nplus1} takes up the
next case; the underlying question is repeated here for emphasis.

\begin{question}
Does an analogue of Theorem~\ref{thm:hilbert}, with an explicit
closed formula, hold for \(n+1\) general points of \(\PP^{n-1}\)?
These configurations are again projectively rigid, but their ideal is
no longer monomial in the same coordinates.  Section~\ref{sec:nplus1}
settles the case \(n=3\) completely (Theorem~\ref{thm:n3-CI}) and
proposes an exact conjectural answer for all \(n\)
(Conjectures~\ref{conj:waldschmidt} and~\ref{conj:support}), backed by
computation but open in general.
\end{question}

\begin{question}
For a general finite set of points \(Z\subset\PP^{n-1}\), when is
\[
 \HS(S/I_Z^m)-\HS(S/I_Z^{(m)})
\]
a polynomial supported strictly below degree \(2m\)?
For the family considered here this support bound is exact:
for \(n\ge3\), \(m\ge2\), the last nonzero degree is \(2m-1\).
\end{question}

\begin{question}
For which finite point configurations is containment
\(I^{(m)}\subseteq I^r\) detected solely by the comparison of initial
degrees
\[
 \alpha(I^{(m)})\ge\alpha(I^r)?
\]
Theorem~\ref{thm:containment} shows that \(n\) general points in
\(\PP^{n-1}\) have this property for all \(m,r\).
\end{question}

\section*{Acknowledgements}
We thank the participants of the problem solving seminar in algebra at
Stockholm University.

\section*{Statement on the use of AI tools}
AI tools were used during exploration and drafting.  The authors take
responsibility for the mathematical content and for the final
verification of all statements included in any submitted version.

\bigskip
\noindent
Department of Mathematics, Stockholm University,
SE-106 91 Stockholm, Sweden

\noindent
Email: \texttt{frobergralf@gmail.com}

\medskip
\noindent
Department of Mathematics, Stockholm University,
SE-106 91 Stockholm, Sweden

\noindent
Email: \texttt{shapiro@math.su.se}


\begin{thebibliography}{99}

\bibitem{DSTG13}
M.~Dumnicki, T.~Szemberg and H.~Tutaj-Gasi\'nska,
\newblock Counterexamples to the \(I^{(3)}\subseteq I^2\) containment,
\newblock \emph{J. Algebra} \textbf{393} (2013), 24--29.

\bibitem{ELS01}
L.~Ein, R.~Lazarsfeld and K.~Smith,
\newblock Uniform bounds and symbolic powers on smooth varieties,
\newblock \emph{Invent. Math.} \textbf{144} (2001), 241--252.

\bibitem{FLOS18}
R.~Fr\"oberg, S.~Lundqvist, A.~Oneto and B.~Shapiro,
\newblock Algebraic stories from one and from the other pockets,
\newblock \emph{Arnold Math. J.} \textbf{4} (2018), 137--160.

\bibitem{GGSV16}
F.~Galetto, A.~V.~Geramita, Y.~S.~Shin and A.~Van Tuyl,
\newblock The symbolic defect of an ideal,
\newblock \emph{J. Pure Appl. Algebra} \textbf{224} (2020), 106435;
preprint arXiv:1610.00176 (2016).

\bibitem{HH02}
M.~Hochster and C.~Huneke,
\newblock Comparison of symbolic and ordinary powers of ideals,
\newblock \emph{Invent. Math.} \textbf{147} (2002), 349--369.

\end{thebibliography}
\end{document}